\let\oldmarginpar\marginpar 
\renewcommand\marginpar[1]{\-\oldmarginpar{\raggedright\small\sf #1}}
\title{Finite group schemes of essential dimension one}
\author{Najmuddin Fakhruddin}
\address{School of Mathematics, Tata Institute of Fundamental
  Research, Homi Bhabha Road, Mumbai 400005, India}
\email{naf@math.tifr.res.in}
\keywords{Essential dimension, finite group schemes}
\subjclass{ 14L20, 14L30, 12F05}
\newcommand{\nc}{\newcommand}
\nc{\rnc}{\renewcommand}
\nc{\bs}{\backslash}
\nc{\te}{\otimes}
\nc{\lf}{\lfloor} 
\nc{\rf}{\rfloor}
\nc{\lc}{\lceil}  
\nc{\rc}{\rceil}
\nc{\lr}{\longrightarrow}
\nc{\sr}{\stackrel}
\nc{\dar}{\dashrightarrow}
\nc{\thra}{\twoheadrightarrow}
\nc{\la}{\langle}
\nc{\ra}{\rangle} 
\nc{\ms}{\mathscr}
\nc{\mc}{\mathcal}
\nc{\mb}{\mathbb}
\nc{\mf}{\mathbf}
\nc{\mr}{\mathrm}
\nc{\mg}{\mathfrak}
\nc{\bP}{\mathbb{P}}
\rnc{\P}{\mathbb{P}}
\nc{\Q}{\mathbb{Q}}
\nc{\Z}{\mathbb{Z}}
\nc{\C}{\mathbb{C}}
\nc{\R}{\mathbb{R}}
\nc{\A}{\mathbb{A}}
\nc{\V}{\mathbb{V}}
\nc{\W}{\mathbb{W}}
\nc{\N}{\mathbb{N}}
\nc{\D}{\mathbb{D}}
\nc{\G}{\mathbb{G}}
\nc{\F}{\mathbb{F}}
\nc{\qb}{\overline{\mathbb{Q}}}
\nc{\del}{\partial}
\nc{\wt}{\widetilde}
\nc{\wh}{\widehat}
\nc{\ov}{\overline}
\nc{\un}{\underline}
\nc{\aff}{{\A}^1}
\nc{\naive}{\!\sim_n}
\nc{\Spec}{\mr{Spec}}
\nc{\omx}{\omega_X}
\nc{\ep}{\epsilon}
\nc{\ve}{\varepsilon}
\nc{\vt}{\vartheta}
\rnc{\l}{\lambda}
\rnc{\k}{\kappa}
\nc{\ovl}{\ov{\lambda}}
\nc{\vl}{\mb{V}_{\ovl}}
\nc{\dl}{\mb{D}_{\ovl}}
\nc{\mnb}{\ov{\mr{M}}_{0,n}}
\nc{\mn}{\mr{M}_{0,n}}
\nc{\mel}{\ov{\mr{M}}_{1,1}}
\nc{\mfb}{\ov{\mr{M}}_{0,4}}
\nc{\mof}{\mr{M}_{0,4}}
\nc{\mgnb}{\ov{\mr{M}}_{g,n}}
\nc{\mgn}{\ov{\mr{M}}_{g,n}}
\nc{\omc}{\ov{\mr{M}}}
\rnc{\sl}{\shoveleft}
\nc{\res}{\operatorname{Res}}
\nc{\pic}{\operatorname{Pic}}
\nc{\spec}{\operatorname{Spec}}
\nc{\im}{\operatorname{Im}}
\nc{\gal}{\operatorname{Gal}}
\nc{\fr}{\operatorname{Fr}}
\nc{\ed}{\operatorname{ed}}
\nc{\rank}{\operatorname{rank}}
\nc{\h}{\operatorname{H}}
\nc{\ch}{\operatorname{char}}
\nc{\sw}{\operatorname{sw}}
\nc{\rsw}{\operatorname{rsw}}
\nc{\supp}{\operatorname{supp}}
\nc{\Mor}{\operatorname{Mor}}
\nc{\Per}{\operatorname{Per}}
\nc{\prep}{\operatorname{Prep}}
\nc{\End}{\operatorname{End}}
\nc{\Orb}{\operatorname{Orb}}
\nc{\lie}{\operatorname{Lie}}
\nc{\aut}{\operatorname{Aut}}
\newtheorem{thm}{Theorem}[section]
\newtheorem{prop}[thm]{Proposition}
\newtheorem{lem}[thm]{Lemma}
\theoremstyle{definition}
\newtheorem{ques}[thm]{Question}
\newtheorem{rem}[thm]{Remark}
\newtheorem*{claim*}{Claim}
\newtheorem{ex}[thm]{Example}
\numberwithin{equation}{section}
\begin{document}

\begin{abstract}
  We prove that if a finite group scheme $G$ over a field $k$ has
  essential dimension one, then it embeds in $PGL_{2/k}$. We use this
  to give an explicit classification of all infinitesimal group
  schemes of essential dimension one over any field and a
  characterisation of all finite group schemes of essential dimension
  one over algebraically closed fields.
\end{abstract}

\maketitle

\section{Introduction}

Let $G$ be a group scheme over a field $k$. The essential dimension of
$G$---see e.g.~\cite{reichstein-icm} or \cite{merk-survey} for the
definition---is a non-negative integer defined using $G$-torsors over
all extensions of $k$. For example, if $G = GL_{n/k}$ then all $G$
torsors are trivial and the essential dimension is zero. The essential
dimension of a non-trivial finite group scheme $G$ is always positive,
so those having essential dimension one are of particular
interest. The main aim of this note is to prove the following:
\begin{thm} \label{thm:edone} %
  Let $G$ be a finite group scheme over a field $k$.
  \begin{enumerate}
  \item If the essential dimension of $G$ over $k$ is one then $G$ can
    be embeded in $PGL_{2/k}$ and $\dim_k(\lie(G)) \leq 1$.
  \item If $G$ is infinitesimal then it has essential dimension one
    over $k$ iff it can be embedded in $PGL_{2/k}$,
    $\dim_k(\lie(G)) = 1$, and $G$ lifts to $GL_{2/k}$. Such group
    schemes exist over $k$ iff $\ch(k) =p > 0$ and a list of such
    group schemes is as follows:
    \begin{enumerate}
    \item $\alpha_{p^n}$ for all $n>0$;
    \item $\mu_{p^n}$ for all $n>0$;
    \item any form of $\mu_{p^n}$, $n > 0$, which becomes isomorphic
      to $\mu_{p^n}$ over a quadratic extension of $k$ if $p \neq 2$.
    \end{enumerate}
  \end{enumerate}
\end{thm}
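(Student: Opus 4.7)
The idea is to translate the hypothesis $\ed_k(G) = 1$ into the existence of a faithful $G$-action on a smooth projective curve of genus zero, and then to embed its automorphism group. Choose a generically free representation $V$ of $G$; the condition $\ed_k(G) = 1$ provides a $G$-equivariant compression $V \dar X$ with $\dim(X/G) = 1$. After normalizing and compactifying, we obtain a faithful action of $G$ on a smooth projective curve $C$ over $k$. Since $V/G$ is rational and dominates $C/G$, the latter is rational, hence a Brauer--Severi variety; a further argument, using that $C \to C/G$ is dominated by an open subset of $V \to V/G$, shows $C$ itself is geometrically rational. To upgrade this to $G \hookrightarrow PGL_{2/k}$ (rather than into a nontrivial twist), I would show $G$ has a $k$-rational fixed point on $C$: the infinitesimal component $G^\circ$ fixes each point in its support, and for the étale quotient one proceeds by Galois descent from the classical cyclic/dihedral picture over $\bar k$. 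The bound $\dim_k \lie(G) \leq 1$ then follows from the general inequality $\ed_k(G) \geq \dim_k \lie(G^\circ)$, obtained by a tangent-space analysis of the versal torsor.

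\textbf{Plan for Part (2).} For infinitesimal $G$, Part (1) already yields the embedding $G \hookrightarrow PGL_{2/k}$ with $\dim_k \lie(G) = 1$. The remaining necessary condition---that $G$ lift to $GL_{2/k}$---should emerge because the natural construction of an essential-dimension-one $G$-torsor is via the quotient $V \setminus \{0\} \to \P(V)$ for a $2$-dimensional linear representation $V$, which yields a genuine $G$-torsor (not merely a $PGL_2$-torsor) exactly when the $PGL_2$-action lifts linearly. Conversely, given $\widetilde G \subset GL_{2/k}$ lifting $G$, the linear action on $\A^2$ produces an explicit $G$-torsor of essential dimension one. The classification then reduces to enumerating infinitesimal subgroup schemes $\widetilde G \subset GL_{2/k}$ with $\dim_k \lie(\widetilde G) = 1$ up to conjugation: over $\bar k$ the tangent generator is either nilpotent, giving $\alpha_{p^n}$, or semisimple, giving $\mu_{p^n}$. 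Galois descent then yields the $k$-forms of $\mu_{p^n}$ in the list; the restriction $p \neq 2$ reflects that $\aut(\mu_{p^n}) = (\Z/p^n\Z)^\times$ contains a unique element of order two precisely when $p$ is odd, corresponding to forms split by a quadratic extension.

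\textbf{Main obstacle.} I expect the most delicate step to be showing that the curve $C$ from Part (1) is $\P^1$ over $k$---equivalently that $G$ embeds in $PGL_{2/k}$ itself rather than in a nontrivial Brauer--Severi twist---since producing a $k$-rational fixed point may require genuinely separate arguments for the infinitesimal and étale parts of $G$. The lifting condition in Part (2) and the analysis of forms of $\mu_{p^n}$ are nontrivial but more mechanical once the geometric picture of Part (1) is in hand.
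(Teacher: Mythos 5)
Your outline reproduces the overall shape of the paper's argument (compression of a versal torsor to a curve, identification of the curve with $\P^1_k$, lifting to $GL_{2/k}$, and a dichotomy between additive and multiplicative types), but it passes over the one step that is genuinely hard and that the paper is largely organised around. You write that ``after normalizing and compactifying, we obtain a faithful action of $G$ on a smooth projective curve $C$.'' For a non-\'etale $G$ this is false as a general principle: a generically defined action of an infinitesimal group scheme on a curve need not extend to a regular action on the normal projective model. The paper's Example~\ref{ex:ex} makes this explicit --- a nontrivial $\mu_p$-torsor over an ordinary curve of genus $\geq 2$ carries a generically free $\mu_p$-action with no equivariant smooth compactification, since such a compactification would carry a nonzero vector field. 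The paper gets around this with Proposition~\ref{prop:extend}: the rational action on the curve $Y$ extends to a regular one precisely because $Y$ receives a $G$-equivariant dominant rational map from a variety $X$ carrying a \emph{regular} $G$-action, and one checks via Lemma~\ref{lem:int} that the coaction $A_{(0)} \to R \otimes A_{(0)}$ preserves $A$. Your construction does produce such an $X$ (the compactified linear representation), but you never identify this as the mechanism, so the central step of Part (1) is missing. Conversely, the obstacle you flag as the main one --- that $C$ might be a nontrivial Brauer--Severi twist --- is dispatched cheaply: $C$ is dominated by a rational variety, so L\"uroth gives $C \cong \P^1_k$ outright. Your proposed fix for it is moreover incorrect: an infinitesimal group scheme acting on $\P^1_k$ need not have a scheme-theoretic $k$-rational fixed point even though it acts trivially on the underlying space (e.g.\ $\mu_2$ inside a nonsplit torus of $PGL_{2/k}$ in characteristic $2$; see the remark following the paper's main proof).

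In Part (2) the sketch is too coarse in characteristic $2$, which is where all the delicacy lies. For $p \neq 2$ the lift is automatic via the \'etale isogeny $SL_{2/k} \to PGL_{2/k}$; for $p = 2$ there exist infinitesimal unipotent subgroups of $PGL_{2/k}$ that do \emph{not} lift, and the paper must use the versality of the $\P^1_k$-torsor, together with a filtration of a faithful representation, to show a unipotent $G$ of essential dimension one fixes a $k$-point and hence lands in the unipotent radical of a Borel. Your dichotomy ``nilpotent generator gives $\alpha_{p^n}$, semisimple gives $\mu_{p^n}$'' also needs the group-scheme-theoretic statement that a one-dimensional Lie algebra forces $G$ to be unipotent or multiplicative (Proposition~\ref{prop:lie}), which is not formal. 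Finally, your explanation of the restriction $p \neq 2$ in item (c) is wrong: $(\Z/2^n\Z)^\times$ does contain elements of order two for $n \geq 2$, so nontrivial quadratic twists of $\mu_{2^n}$ exist, embed in $PGL_{2/k}$ and have one-dimensional Lie algebra; they are excluded because they fail to lift and because $\ed_k(G;2) = 2$ for such twists by \cite{LMMR}, not because the twists do not exist.
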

Ledet proved in \cite{ledet-one} that a constant finite group $G$ has
essential dimension one iff it embeds in $PGL_{2/k}$ and lifts to
$GL_{2/k}$.  A complete list of such $G$ was given by Chu, Hu, Kang
and Zhang \cite{CHKZ}. It is likely that by combining these results
with Theorem \ref{thm:edone} one can classify all finite group schemes
of essential dimension one.  Theorem \ref{thm:finite} gives such a
classification for group schemes over perfect fields with constant
etale quotients.

Tossici has conjectured \cite[Conjecture 1.4]{toss-unip} that for a
finite commutative unipotent group $G$, $\ed_k(G) \geq n_V(G)$, where
$n_V(G)$ is the order of nilpotence of the Verschiebung morphism of
$G$. Theorem \ref{thm:edone}, and the fact that $\ed_k(\Z/p^2\Z) = 2$
(over a field of characteristic $p$), implies that $\ed_k(G) \geq 2$
if $n_V(G) \geq 2$. In particular, we see that the $p$-torsion of a
supersingular elliptic curve has essential dimension two (Example
\ref{ex:ss}).

\smallskip

For a constant group scheme $G$ with essential dimension one over $k$
it is immediate that $G$ embeds in $PGL_{2/k}$; this is because any
rationally defined action of a finite group on a smooth projective
curve extends to a regular action. However, this extension property is
far from being true for infinitesimal group actions and the key to our
classification is a simple criterion (Proposition \ref{prop:extend})
for the existence of such extensions. Aside from this, we also use
some basic structure theory of finite group schemes, especially in the
case $\ch(k) = 2$.

\subsection{}

For the basic definitions in the theory of essential dimension we
refer the reader to \cite{reichstein-icm} or \cite{merk-survey}; we
only need the definition of essential dimension of a group scheme over
a field $k$, denoted by $\ed_k(G)$ below, the $p$-essential dimension
for a prime $p$, denoted by $\ed_k(G;p)$ and the notion of versal
torsors \cite[\S 3d]{merk-survey}.  For the particular case of
infinitesimal group schemes the reader may consult \cite{tos-vis}. Our
basic reference for the theory of (finite) group schemes is \cite{DG}.

\subsection{Acknowledgements}
I thank Patrick Brosnan and Zinovy Reichstein for helpful comments on
an earlier version of this note, and Dajano Tossici for pointing out
an error in the original proof of the main theorem when $p=2$ and for
other useful comments.

\section{Preliminary results}

\subsection{An extension criterion}

\begin{lem} \label{lem:int}%
  Let $\iota:A \to B$ be an inclusion of noetherian integral domains
  with $A$ normal and with the map $\spec(B) \to \spec(A)$ being
  surjective (or with image containing all height one primes). If
  $b \in B$ is such that $b \in A_{(0)}$ (the quotient field of $A$),
  then $b \in A$.
\end{lem}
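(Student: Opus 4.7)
The plan is to reduce to showing $b \in A_\mathfrak{p}$ for every height one prime $\mathfrak{p}$ of $A$. Since $A$ is a noetherian normal domain, Serre's/Krull's criterion gives $A = \bigcap_\mathfrak{p} A_\mathfrak{p}$ inside the fraction field $K(A) = A_{(0)}$, with the intersection taken over all such $\mathfrak{p}$; as $b$ already lies in $K(A)$ by hypothesis, the reduction would immediately force $b \in A$.

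So fix a height one prime $\mathfrak{p} \subset A$. The hypothesis on $\spec(B) \to \spec(A)$---and note that the weaker alternative in parentheses already suffices, since we only need primes lying over height one primes of $A$---supplies a prime $\mathfrak{q} \subset B$ with $\mathfrak{q} \cap A = \mathfrak{p}$. Then $A \setminus \mathfrak{p} \subset B \setminus \mathfrak{q}$, so localizing gives an inclusion of subrings $A_\mathfrak{p} \subset B_\mathfrak{q}$ of $K(B)$, with $\mathfrak{p} A_\mathfrak{p} \subset \mathfrak{q} B_\mathfrak{q}$. I would then argue by contradiction: suppose $b \notin A_\mathfrak{p}$. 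Since $A_\mathfrak{p}$ is a DVR and $b \in K(A)$, the inverse $b^{-1}$ must lie in $\mathfrak{p} A_\mathfrak{p} \subset \mathfrak{q} B_\mathfrak{q}$. But $b \in B \subset B_\mathfrak{q}$, so $1 = b \cdot b^{-1}$ would lie in $\mathfrak{q} B_\mathfrak{q}$, contradicting that this is a proper (maximal) ideal. Hence $b \in A_\mathfrak{p}$, as required.

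The proof is routine commutative algebra and I do not anticipate any serious obstacle. The only noteworthy point is that the inclusion $A_\mathfrak{p} \subset B_\mathfrak{q}$ by itself carries enough structure to force the contradiction: one does \emph{not} need $B$ to be integral over $A$, nor any going-down property, beyond the mere existence of a prime $\mathfrak{q}$ of $B$ lying over each height one prime of $A$---which is exactly what both versions of the surjectivity hypothesis provide.
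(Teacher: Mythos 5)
Your proof is correct and follows essentially the same route as the paper's: reduce via normality to checking $b$ at each height one prime, then use a prime of $B$ lying over it to derive the contradiction $1 = b\cdot b^{-1}$ landing in a proper ideal. You are slightly more explicit than the paper in passing to the localization $B_{\mathfrak{q}}$ rather than just invoking a prime of $B$, but the idea is identical.
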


\begin{proof}
  Since A is normal, it suffices to show that the valuation of $b$ at
  any height one prime of $A$ is non-negative, so by localisation we
  may assume that $A$ is a dvr.  If $b \notin A$ then
  $b^{-1} \in m_A$. By surjectivity of the map
  $\spec(B) \to \spec(A)$, it follows that $b^{-1} \in P$ for some
  prime ideal $P$ of $B$. But this implies that
  $1 = b\cdot b^{-1} \in P$, a contradiction.
\end{proof}

\begin{prop} \label{prop:extend}%
  Let $k$ be a field, let $G$ be an infinitesimal group scheme over
  $k$ and let $Y$ be a normal projective curve with a generically
  defined action of $G$. If there exists $X$, a normal projective
  variety over $k$ with a regular action of $G$, and a dominant
  rational map $f:X \dar Y$ compatible (generically) with the
  $G$-actions, then the rational action of $G$ extends uniquely to a
  regular action on $Y$.  If $k$ is perfect or $Y$ is smooth, then $G$
  can be taken to be any finite group scheme.
\end{prop}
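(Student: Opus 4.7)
The uniqueness of the extension follows from separatedness of $Y$, so I focus on existence. My plan is to verify the extension pointwise on $Y$, applying Lemma \ref{lem:int} at each closed point. I first treat the case of an infinitesimal $G$; the general case (when $k$ is perfect or $Y$ is smooth) then follows by splitting off the \'etale part of $G$ via the connected-\'etale sequence and invoking the classical extension of rational actions of \'etale group schemes on normal (resp.\ smooth) proper curves.

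Fix a closed point $y_0 \in Y$, and set $A = \mathcal{O}_{Y, y_0}$ and $R = \mathcal{O}_{G, e}$. The rational action at the generic point is encoded by a $k$-algebra homomorphism $\phi_Y \colon K(Y) \to K(Y) \otimes_k R$, and extending $\mu_Y$ regularly over $(e, y_0)$ amounts to showing $\phi_Y(A) \subset A \otimes_k R$. Expanding $\phi_Y(a) = \sum_i a_i r_i$ in a $k$-basis $\{r_i\}$ of $R$, the task reduces to proving that each coefficient $a_i \in K(Y)$ lies in $A$. The regular action on $X$ produces the analogous $\phi_X \colon K(X) \to K(X) \otimes_k R$, which preserves every stalk $\mathcal{O}_{X, x}$, and $G$-equivariance of $f$ forces $\phi_X$ to extend $\phi_Y$ along the inclusion $f^\# \colon K(Y) \hookrightarrow K(X)$.

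Assuming for now that for each $y_0 \in Y$ there is a closed point $x_0 \in X$ at which $f$ is defined with $f(x_0) = y_0$, set $B = \mathcal{O}_{X, x_0}$. Then $f^\# \colon A \hookrightarrow B$ is an injection of integral domains, $A$ is normal (being a DVR), and $\spec(B) \to \spec(A)$ is surjective. By the preceding paragraph $\phi_X(A) \subset B \otimes_k R$, so each $a_i \in B$; since also $a_i \in K(Y)$, Lemma \ref{lem:int} gives $a_i \in A$, completing the local argument.

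The main obstacle is producing such an $x_0$ for every $y_0 \in Y$. When $\dim(X) = 1$ this is trivial, since $f$ extends to a surjective morphism of normal projective curves. For $\dim(X) \geq 2$, I would argue via the closure $\Gamma \subset X \times Y$ of the graph of $f$: the projection $\pi_2 \colon \Gamma \to Y$ is proper and surjective, and since the indeterminacy locus of $f$ has codimension at least $2$ in $X$ (by normality), comparing the dimensions of $\pi_2^{-1}(y_0)$ (of dimension $\geq \dim(X) - 1$) and the exceptional locus of $\pi_1 \colon \Gamma \to X$ (of dimension $\leq \dim(X) - 1$) yields $x_0$ in generic situations. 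In the remaining degenerate cases, where $\pi_2^{-1}(y_0)$ is entirely contained in an exceptional divisor of $\pi_1$ contracted by $\pi_2$ onto $y_0$, one instead passes to a $G$-equivariant normal projective modification $\tilde X \to X$ on which $f$ becomes a morphism---constructed by blowing up the (scheme-theoretic, $G$-invariant) indeterminacy of $f$ and normalizing---and runs the same argument with $\tilde X$ in place of $X$.
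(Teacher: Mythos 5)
Your core mechanism is the same as the paper's: produce, for each closed point $y_0\in Y$, a point $x_0\in X$ at which $f$ is defined with $f(x_0)=y_0$, and then apply Lemma \ref{lem:int} coordinatewise (in a $k$-basis of $R=\mc{O}(G)$) to the local injection $\mc{O}_{Y,y_0}\hookrightarrow\mc{O}_{X,x_0}$ to pull the coaction coefficients from $K(Y)$ down into $\mc{O}_{Y,y_0}$. That part is correct. The first genuine weak point is your production of $x_0$. Your fallback for the ``degenerate case'' --- blowing up ``the scheme-theoretic, $G$-invariant indeterminacy'' and normalizing --- is not available here: for an infinitesimal $G$ the \emph{reduced} indeterminacy locus is generally not $G$-invariant as a closed subscheme (compare $\alpha_p$ translating $\A^1$: the reduced origin is not preserved), and since $G$ acts only generically on $Y$, the graph closure $\Gamma\subset X\times Y$ is not obviously $G$-stable either, so there is no evident $G$-invariant ideal to blow up. Fortunately the fallback is unnecessary: any irreducible component $F$ of $\pi_2^{-1}(y_0)$ lies in $X\times\{y_0\}$, so $\pi_1|_F$ is a closed immersion and $\dim\pi_1(F)=\dim F\geq\dim X-1$; since the indeterminacy locus of $f$ has codimension $\geq 2$ in the normal variety $X$, $\pi_1(F)$ cannot be contained in it, which yields the required $x_0$. (The paper gets the same surjectivity by slicing $X$ with a general complete intersection curve avoiding the codimension-two indeterminacy locus and using properness of that curve.)

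The second gap is in your one-line reduction of the last assertion. Over a perfect field the connected--\'etale sequence splits and your plan works, provided you also say why the separately extended $G^0$- and $G^{et}$-actions assemble into an action of the semidirect product (the paper invokes uniqueness of the extensions for this). But when $Y$ is smooth and $k$ is \emph{not} perfect, the sequence $1\to G^0\to G\to G^{et}\to 1$ need not split and $G^{et}$ is not a subgroup acting on $Y$, so ``splitting off the \'etale part'' does not apply. The paper's argument for this case is different: base change to a perfect extension $K/k$ (where $Y_K$ is still smooth and the extension exists), then descend the extended coaction to $k$ using that an element of $A_{(0)}$ lying in $A\otimes_k K$ already lies in $A$. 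Some such descent step is needed and is missing from your outline.
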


It is easy to see that rational actions of infinitesimal group schemes
on smooth projective curves do not always extend to regular actions,
so our hypothesis on the existence of the equivariant rational map
$f:X\dar Y$ is not superfluous; see Example \ref{ex:ex} below.

\begin{proof}
  Assume first that $k$ is arbitrary and $G$ is infinitesimal.
  Let $U$ be the maximal open subset of $X$ on which the rational map
  $f$ restricts to a morphism $f|_U:U \to Y$. The map $f|_U$ is
  surjective, because by normality $X \bs U$ is of codimension at
  least two in $X$ so a general complete intersection curve $C \subset
  X$ will lie in $U$ and $f|_C:C \to Y$ is surjective since $C$ is
  proper and also general.

  Let $y \in Y$ be any closed point and let $u \in U$ be a (closed)
  point such that $f(u) = y$. Let $V = \spec(A)$ be any open affine
  subset of $Y$ with $y \in V$ and such that the $G$-action on $Y$ is
  defined at all points of $V \bs \{y\}$. Let $W \subset U$ be any
  affine open subset such that $u \in W$ and $f(W) \subset V$. Since
  $G$ is infinitesimal, $W$ is $G$-invariant and $G$ acts generically
  on $V$.  Let $W = \spec(B)$, so $f$ induces an inclusion
  $\iota:A \to B$.

  Let $G = \spec(R)$.  Then $R$ is a Hopf algebra and the $G$-actions
  correspond to maps $c_1:B \to R \otimes_k B $ and
  $c_2:A_{(0)} \to R \otimes A_{(0)}$ (satisfying the usual
  properties). For any $a \in A$, we have that
  $c_1(\iota(a)) = c_2(a)$, where the equality holds in
  $R \otimes B_{(0)}$. Choosing a basis of $R$ over $k$ and then
  applying Lemma \ref{lem:int} coordinatewise, we see that
  $c_2(a) \in R \otimes A$.  This proves that the action of $G$ on
  $V \bs \{y\}$ extends to all of $V$---the necessary identities hold
  because the map $A \to A_{(0)}$ is an injection---and this extension
  is clearly unique.  Since $y \in Y$ was arbitrary, it follows that
  the generically defined action of $G$ extends uniquely to all of
  $Y$.

  Now suppose that $k$ is perfect and $G$ is any finite group scheme
  over $k$. By \cite[II, \S 5, 2.4]{DG}, $G$ is a semidirect product
  of its identity component $G^0$ and its etale quotient $G^{et}$. The
  action of $G^0$ extends to all of $Y$ and the generic action of an
  etale group scheme (over any field) on a normal projective curve
  extends uniquely to a regular action. This is clear for a constant
  group scheme and the case of an etale group scheme follows from this
  by Galois descent.  The uniqueness of the two extensions then
  implies that the action of $G$ also extends.

  Now suppose that $k$ is arbitrary, $G$ is any finite group scheme,
  and $Y$ is smooth. We know that an extension of the $G^0$ action
  exists over $k$ and an extension of the $G$ action exists after we
  base change to any perfect field $K/k$, since $Y_K$ is also
  smooth. For $V = \spec(A)$ any nonempty affine open subset of $Y$ as
  above, consider the action map $c_2: A_{(0)} \to R \otimes A_{(0)}$
  as above. Using a basis of $R$ over $k$ and writing $c_2(a)$ in
  terms of coordinates, we see that an extension of the $G$ action
  actually exists over $k$ since any element of $A_{(0)}$ which lies
  in $A\otimes_kK$ must also lie in $A$. The extension is unique since
  this holds over $K$.
\end{proof}

\begin{rem}
  The assumption that $Y$ is a curve was only used in the proof to
  deduce the surjectivity of $f|_U$. If instead of $f$ being a
  rational map, we assume that it is regular and surjective (in
  codimension one), then the proof in the case of infinitesimal $G$
  works for any normal $Y$ (without any projectivity/properness
  hypothesis on $Y$ or $X$).
\end{rem}

\begin{ex} \label{ex:ex}%
  Let $Z$ be a smooth projective ordinary curve of genus $g \geq 2$
  over an algebraically closed field $k$ of charactersitic $p>0$. Let
  $T$ be a non-trivial $\mu_p$-torsor over $Z$ corresponding to
  non-zero element of $\pic(Z)[p]$. Such a torsor is non-trivial over
  the generic point of $Z$, so $T$ must be integral at its generic
  point. However, there is no $\mu_p$-equivariant completion of this
  torsor with total space a smooth projective curve since such a curve
  would map to $Z$, so would have genus at least $2$, and a smooth
  projective curve of genus $g\geq 2$ has no non-zero vector fields.
\end{ex}

We do not know the answer to the following:
\begin{ques} \label{ques:ex}%
  Does the action of a finite group scheme on a function field of
  transcendence degree one always extend to a proper model of the
  function field?
\end{ques}

\begin{lem} \label{lem:action}%
  Let $k$ be any field and $G$ an affine group scheme of finite type
  over $k$. Then there exists a smooth projective (geometrically
  irreducible) rational variety $X$ over $k$ with a generically free
  action of $G$.
\end{lem}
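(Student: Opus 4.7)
The plan is to embed $G$ as a closed subgroup scheme of some $GL_{n,k}$---which is possible for any affine group scheme of finite type over a field (see e.g.~\cite[II, \S 2, 3.3]{DG})---and then exhibit a $GL_n$-equivariant projective compactification of $GL_n$ itself that will automatically have the desired properties when restricted to $G$.

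Concretely, I would consider the scheme $V \cong \A_k^{n^2}$ of $n \times n$ matrices together with the left multiplication action of $GL_n$: as a $GL_n$-representation, $V$ decomposes as the direct sum of $n$ copies of the standard representation, one for each column. This linear action extends to $V \oplus k$ with $GL_n$ acting trivially on the second summand, and hence induces an action on $X := \bP(V \oplus k) \cong \bP_k^{n^2}$. This $X$ is manifestly smooth, projective, geometrically integral and $k$-rational, and the same is true when we view it as a $G$-variety via the inclusion $G \hookrightarrow GL_n$.

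For generic freeness, the complement of the hyperplane at infinity in $X$ is a $GL_n$-invariant open subset equivariantly isomorphic to $V$ with the left multiplication action, and inside it the open dense subscheme $GL_n \subset V$ is again $GL_n$-invariant. The left translation action of $GL_n$ on itself is free---$gA = A$ with $A$ invertible forces $g = 1$---so the $G$-action on $X$ has trivial scheme-theoretic stabilizer on the nonempty open $GL_n$, and in particular is generically free.

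I do not anticipate any real obstacle here: the argument is simply an assembly of a standard embedding theorem, a standard linear representation, and a standard projectivization, after which every property demanded by the lemma is transparent. The only minor point worth being careful about is that one must projectivize $V \oplus k$ rather than $V$ so that the open cell carrying the free translation action is preserved in $X$; a linear $G$-action on $V$ itself would descend to $\bP(V)$ only after quotienting by scalars, which would spoil genericity of the stabilizer whenever $G \cap \G_m \neq 1$.
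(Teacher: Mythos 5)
Your proposal is correct and follows exactly the paper's argument: embed $G$ in $GL_n$, let it act on the space $M_n$ of $n\times n$ matrices, and take $X = \P(M_n \oplus k)$, with generic freeness coming from the freeness of left translation on the open subset $GL_n \subset M_n$. The only difference is that you spell out the details (the column decomposition, why one compactifies $V \oplus k$ rather than $V$) that the paper leaves implicit.
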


\begin{proof}
  Since $G$ is of finite type it can be embedded in $GL_n$ for some
  $n>0$, so $G$ acts linearly and generically freely on $M_n$, the
  space of $n \times n$ matrices. We may then take $X$ to be the
  projective completion of $M_n$, i.e., $\P(M_n \oplus k)$.
\end{proof}

\subsection{Infinitesimal group schemes with one dimensional Lie
  algebra}

In this section we assume that $\ch(k) = p > 0$.  We denote the Lie
algebra of group schemes $G$, $H$, \dots, by $\mg{g}$, $\mg{h}$,
\dots.

\begin{lem} \label{lem:sub} Let $G$ be an infinitesimal group scheme
  over a field $k$ with $\dim_k(\mg{g}) = 1$. If $H$ is any
  subquotient of $G$, then $\dim_k(\mg{h}) \leq 1$.
\end{lem}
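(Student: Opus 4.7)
The plan is to handle separately the two cases $H \hookrightarrow G$ (subgroup) and $G \twoheadrightarrow H$ (quotient); a general subquotient $H = G_1/G_2$ is then treated in two steps, applying the subgroup result to $G_1 \subseteq G$ and then the quotient result to $G_1 \twoheadrightarrow H$. The subgroup case is immediate: the closed immersion $H \hookrightarrow G$ gives a surjection $\mg{m}_G/\mg{m}_G^2 \twoheadrightarrow \mg{m}_H/\mg{m}_H^2$ and hence an injection $\mg{h} \hookrightarrow \mg{g}$, so $\dim_k \mg{h} \le 1$.

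For a quotient $G \twoheadrightarrow H$, the key observation is that $R := \mc{O}(G)$ is a finite local $k$-algebra with $\dim_k \mg{m}_R / \mg{m}_R^2 = 1$, so $R \cong k[t]/(t^M)$ by Nakayama. Let $S := \mc{O}(H) \subseteq R$ (a sub-Hopf-algebra), consider the $t$-adic filtration $F^i S := S \cap (t^i)$, and set $J := \{i : F^i S \neq F^{i+1} S\}$ together with $d := \min(J \setminus \{0\})$. If $d = 1$ then any $s \in S$ of $t$-adic valuation one satisfies $k[s] = R$, forcing $S = R$, so assume $d \ge 2$. Writing $\Delta(t) = t \otimes 1 + 1 \otimes t + \eta$ with $\eta \in \mg{m}_R \otimes \mg{m}_R$ (forced by the counit axiom), any $s \in S$ with leading term $c \, t^e$ satisfies
\[
\Delta(s) \equiv c \sum_{i=0}^{e} \binom{e}{i} \, t^i \otimes t^{e-i} \pmod{\mg{m}_{R \otimes R}^{e+1}}.
\]
The Hopf condition $\Delta(S) \subseteq S \otimes S$ forces this residue into the image of $S \otimes S$ in $\mg{m}_{R \otimes R}^e / \mg{m}_{R \otimes R}^{e+1}$, which is spanned by $t^i \otimes t^{e-i}$ with $i, e-i \in J$. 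Hence $\binom{e}{i} \not\equiv 0 \pmod p$ implies $i, e - i \in J$; taking $e = d$, minimality of $d$ forces $\binom{d}{i} \equiv 0 \pmod p$ for $0 < i < d$, i.e.\ $d = p^n$.

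The main step is then an induction proving $J = \{0, d, 2d, 3d, \dots\} \cap [0, M-1]$. The inclusion $\supseteq$ is supplied by $s_d^k \in S$; to rule out $e' \in J$ with $kd < e' < (k+1)d$, I would write $e' = kd + r$ with $0 < r < p^n$ and apply Lucas' theorem: taking $m < n$ to be the position of the lowest nonzero base-$p$ digit of $r$, one gets $\binom{e'}{p^m} \not\equiv 0 \pmod p$, yet $p^m \notin \{0, d, 2d, \dots, kd, e'\}$, a contradiction. Once $J$ is identified, any $s_d \in F^d S \setminus F^{d+1} S$ generates $S$ as a $k$-algebra, so $S$ is a quotient of a polynomial ring in one variable and $\dim_k \mg{m}_S / \mg{m}_S^2 \le 1$. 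I expect the main obstacle to be this Lucas-theorem induction: the Hopf condition yields its leading-order constraint fairly easily, but extracting the full description of $J$ requires a careful base-$p$ combinatorial argument.
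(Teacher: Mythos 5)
Your proof is correct, but for the quotient case it takes a genuinely different route from the paper. The paper invokes the standard fact that a quotient map of finite group schemes is faithfully flat, so that $B=\mc{O}(G)\cong k[x]/(x^{p^n})$ is \emph{free} over $A=\mc{O}(H)$; letting $r$ be the minimal $x$-adic valuation of a nonzero element of $\mg{m}_A$ (your $d$), the elements $1,x,\dots,x^{r-1}$ are part of an $A$-basis of $B$, giving $\dim_k A\le p^n/r$, while the powers of an element $a$ of valuation $r$ give $\dim_k A\ge \lceil p^n/r\rceil$; equality forces $r\mid p^n$ and $A=k[a]$, so $\mg{m}_A$ is principal. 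You instead use only that $\mc{O}(H)$ is a sub-Hopf-algebra of $k[t]/(t^M)$ and extract everything from the compatibility of the comultiplication with the valuation filtration plus Lucas' theorem. Your approach is longer and the one point you should tighten is the identification of the image of $(S\otimes S)\cap\mg{m}_{R\otimes R}^{e}$ in $\mg{m}_{R\otimes R}^{e}/\mg{m}_{R\otimes R}^{e+1}$: this follows by choosing a basis of $S$ adapted to the filtration $F^{\bullet}S$ and noting that the leading terms $t^a\otimes t^b$ for distinct pairs $(a,b)\in J\times J$ are independent in the associated graded, but it deserves a sentence. In exchange you avoid the flatness input entirely and obtain more precise information, namely the explicit presentation $\mc{O}(H)\cong k[u]/(u^{M/d})$ and the full jump set $J=\{0,d,2d,\dots\}$ (also, your Lucas argument directly shows that any $e'\in J$ must be a multiple of $d$, so the induction on $k$ is not really needed). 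Both arguments pivot on the same quantity (your $d$ equals the paper's $r$) and both conclude that it is a power of $p$ and that a single element of that valuation generates $\mc{O}(H)$.
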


\begin{proof}
  The statement is clear for subgroup schemes so it suffices to prove
  it for quotients. Let $K \subset G$ be a normal subgroup scheme and
  let $H = G/K$. Clearly $\mc{O}(G) \cong B = k[x]/(x^{p^n})$ for some
  $n\geq 0$, and $\mc{O}(H) \cong A$, where $A$ is a $k$-subalgebra of
  $B$ such that $B$ is flat over $A$. If $A=k$ the lemma is clear, so
  we may assume that the maximal ideal $m_A$ of $A$ is nonzero.

  Let $r$ be the smallest integer such that $m_A$ contains an element
  $a \in m^r_B\bs m^{r+1}_B$. Then the elements $1,x,\dots,x^{r-1}$ of
  $B$ are linearly independent in $B/m_AB$, so they must be part of a
  basis of $B$ over $A$. In particular, we get that
  $p^n = \dim_k(B) \leq r \dim_k(A)$. On the other hand, the structure
  of the ring $B$ implies that
  $1,a,\dots, a^{\lceil \frac{p^n}{r} \rceil}$ are $k$-independent
  elements of $A$. This implies that $r | p^n$ and $a$ generates
  $m_A$, so $\mg{h}$ is one dimensional.
  
\end{proof}

\begin{rem}
  The lemma holds with $1$ replaced by $n$ for arbitrary finite type
  group schemes $G$ over a field $k$, but we do not give the details
  here since we do not need this.
\end{rem}

\begin{prop} \label{prop:lie}%
  Let $G$ be an infinitesimal group scheme over a field $k$ with
  $\dim_k(\mg{g}) = 1$. Then either $G$ is multiplicative or it is
  unipotent.
\end{prop}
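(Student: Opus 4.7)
The plan is first to reduce to the case $k = \ov{k}$ and then to apply the structure theory of commutative infinitesimal group schemes over perfect fields, after having established commutativity of $G$. Both ``multiplicative'' and ``unipotent'' are geometric properties of a finite commutative group scheme: multiplicativity is equivalent to the Cartier dual $G^D$ being etale, and etaleness of a finite $k$-algebra descends from $\ov{k}$ to $k$; unipotence is equivalent to the absence of a non-trivial multiplicative subgroup scheme, which is again geometric. So it suffices to prove the statement assuming $k$ is algebraically closed.

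The crucial step is to show $G$ is commutative. Since $\dim_k \mg{g} = 1$, Nakayama's lemma forces the augmentation ideal of $\mc{O}(G)$ to be principal, so $\mc{O}(G) \cong k[x]/(x^{p^n})$ as an augmented algebra, and the comultiplication takes the form $\Delta(x) = F(x, y) \in k[x, y]/(x^{p^n}, y^{p^n})$ with $F \equiv x + y \pmod{(x, y)^2}$. This is a truncation of a $1$-dimensional formal group law, and the classical argument of Lazard (an induction on degree using associativity) shows that any such $F$ is symmetric; hence $G$ is commutative.

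With commutativity in hand, over the algebraically closed (in particular perfect) field $k$ the standard decomposition \cite{DG} gives $G = G_m \times G_u$, with $G_m$ of multiplicative type and $G_u$ unipotent, and correspondingly $\mg{g} = \mg{g}_m \oplus \mg{g}_u$. Since a non-trivial infinitesimal group scheme has a non-zero Lie algebra (apply Nakayama to the maximal ideal of its coordinate ring), the hypothesis $\dim_k \mg{g} = 1$ forces exactly one of $G_m$, $G_u$ to be trivial; hence $G$ is either multiplicative or unipotent.

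The main obstacle is the commutativity step: Lazard's argument is most commonly stated in $k[[x, y]]$ rather than in a truncation, so one has to check carefully that the degree-by-degree comparison of coefficients goes through in $k[x, y]/(x^{p^n}, y^{p^n})$. A cleaner alternative is to exhibit an explicit lift of $F$ to a formal group law on $k[[x, y]]$ (possible in dimension one by an obstruction-theoretic argument) and to apply Lazard's theorem verbatim.
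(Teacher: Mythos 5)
The reduction to $k=\ov{k}$ and the final splitting of a commutative finite group scheme over a perfect field into multiplicative and unipotent parts are fine, but the commutativity step, which carries the entire weight of your argument, has a genuine gap. You invoke Lazard's commutativity theorem in a setting where its proof does not apply, and the mechanism you cite (``an induction on degree using associativity'') is not how that theorem is actually proved in characteristic $p$. The degree-by-degree analysis only shows that the lowest-degree homogeneous term of $F(x,y)-F(y,x)$ is an alternating $2$-cocycle on $\wh{\G}_a$, and in characteristic $p$ such cocycles exist: $x^{p^i}y^{p^j}-x^{p^j}y^{p^i}$ is one, realized as the commutator pairing of a genuinely noncommutative central extension of $\G_a$ by $\G_a$ (take the non-symmetric cocycle $f(x,y)=x^p y$, which satisfies $f(x,y)+f(x+y,z)=f(y,z)+f(x,y+z)$). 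So the induction does not terminate in a contradiction; the known proofs of commutativity in characteristic $p$ use more than the leading term (e.g.\ reduction to the universal formal group law over the torsion-free Lazard ring, or invariant-differential/height arguments), and none of these have an evident analogue for a comultiplication defined only modulo $(x^{p^n},y^{p^n})$. Note also that this truncation is by the ideal $(x^{p^n},y^{p^n})$, not by total degree, so your object is not a ``bud'' in Lazard's sense and his extension theorem for buds does not apply; the fallback claim that $F$ lifts to an honest formal group law on $k[[x,y]]$ ``by an obstruction-theoretic argument'' is exactly as hard as what you are trying to prove, since a lift would exhibit $G$ as a finite subgroup scheme of a one-dimensional formal group, whence commutativity is automatic. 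Tellingly, the remark immediately following this proposition in the paper leaves open whether unipotent infinitesimal $G$ with $\dim_k(\mg{g})=1$ are all commutative---which is precisely what you claim to obtain in passing.

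The paper's proof avoids commutativity altogether: it takes the height-one characteristic subgroup $G_1$ attached to $\mg{g}$, applies Lemma \ref{lem:sub} and induction on the order to build a filtration by characteristic subgroups with successive quotients of order $p$ (hence isomorphic to $\alpha_p$ or $\mu_p$ over $\ov{k}$), and then rules out a ``mixed'' subquotient of order $p^2$ by showing that any extension of $\mu_p$ by $\alpha_p$, or of $\alpha_p$ by $\mu_p$, over $\ov{k}$ splits (trigonalizability, resp.\ SGA~3), hence has two-dimensional Lie algebra, contradicting Lemma \ref{lem:sub}. To salvage your approach you would have to supply a complete proof that every Hopf algebra structure on $k[x]/(x^{p^n})$ is cocommutative; otherwise an argument of the filtration type above is needed.
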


\begin{proof}
Since Lie algebras are compatible with field extensions and so are the
notions of multiplicative and unipotent groups, we may assume that $k$
is algebraically closed.
  
Let $G_1$ be the height one subgroup scheme of $G$ corresponding to
$\mg{g}$ (viewed as $p$-Lie algebra); see for example \cite[II, \S 7,
4.3]{DG}. This is a characteristic subgroup scheme of $G$ of order
$p$, so we may form the quotient $G/G_1$. By Lemma \ref{lem:sub} and
induction on the order of $G$, we get a filtration
\[
  \{1\} = G_0 \subset G_1 \subset G_2 \subset \dots \subset G_n = G
\]
where each $G_i$ is a characteristic subgroup of $G$ and $G_i/G_{i-1}$
is of order $p$ for $1 \leq i \leq n$, so isomorphic to $\alpha_p$ or
$\mu_p$. To prove the lemma it suffices by \cite[IV, \S 1, 4.5]{DG} to
show that all these quotients are of the same type.

If $G$ is neither unipotent nor multiplicative, it follows from the
above that $G$ has a subquotient of order $p^2$ which is an extension
of $\mu_p$ by $\alpha_p$ or an extension of $\alpha_p$ by $\mu_p$.

In the first case, \cite[IV, \S 2, 3.3]{DG} implies that the extension
is a trigonalisable group scheme. Then by \cite[IV, \S 2, 3.5]{DG} the
extension splits, i.e., it is a semidirect product.
In the second case, since $k$ is algebraically closed, so perfect, it
follows from \cite[Th\'eor\`eme 6.1.1 (B)]{SGA3II} that the extension
is trivial.


Lemma \ref{lem:sub} leads to a contradiction in both cases, so the
proposition is proved.
\end{proof}

\begin{rem}
  Multiplicative group schemes $G$ with $\dim_k(\mg{g}) = 1$ are just
  forms of $\mu_{p^n}$ for some $n>0$. Can one classify all unipotent
  group schemes with one dimensional Lie algebras? (It seems likely
  that they are all commutative.)
\end{rem}

\section{Proof of the main theorem}

\begin{proof}[Proof of Theorem \ref{thm:edone}]
  For any non-trivial finite group scheme $G$, $\ed_k(G) \geq 1$ since
  there exist $G$-torsors over any extension of $k$ whose underlying
  scheme is integral.

  Now suppose $G$ is an arbitrary finite group scheme with
  $\ed_k(G) = 1$. By Lemma \ref{lem:action}, there exists a smooth
  projective rational variety $X$ with a generically free action of
  $G$. Let $\emptyset \neq U \subset X$ be an open subset on which $G$
  acts freely and let $V = X/G$, so $U$ is the total space of a
  $G$-torsor over $V$. The induced torsor over $\spec(k(V))$ does not
  have essential dimension $0$, since any $G$-torsor over $\spec(k)$
  becomes trivial over an algebraic closure $\bar{k}$ of $k$ but the
  torsor over $\spec(\bar{k}(V_{\bar{k}}))$ is non-trivial. Thus,
  since $\ed_k(G) = 1$, there is a normal integral curve $Z$ over $k$,
  a $G$-torsor $T$ over $Z$, and a dominant rational map $V \dar Z$
  such that $X$ is generically equal to $V \times_Z T$ (which makes
  sense over the generic point of $V$) as a $G$-torsor.

  Since $X$ is integral, the fibre of $T$ over the generic point of
  $Z$ must also be integral. Let $Y$ be the unique normal projective
  curve with function field equal to $k(T)$ (the residue field at the
  generic point of $T$), so $Y$ has a generically defined, generically
  free, action of $G$ and we have a $G$-equivariant dominant rational
  map from $X$ to $Y$.  Since $X$ is rational, Luroth's theorem
  implies that $Y \cong \P^1_k$; in particular, $Y$ is smooth. By
  Proposition \ref{prop:extend}, the generic $G$-action extends to a
  regular action of $G$ on $Y$. The generic freeness of the $G$-action
  on $Y$ then gives an embedding
  $G \hookrightarrow \aut(Y) \cong PGL_{2/k}$. Finally, since the
  $G$-action on $Y$ is generically free and $Y$ is a smooth curve, it
  follows that $\dim_k(\mg{g}) \leq 1$.

  Now let $G \subset PGL_{2/k}$ be an infinitesimal subgroup scheme
  with $\dim_k(\mg{g}) = 1$.  If $G$ lifts to a subgroup scheme of
  $GL_{2/k}$, then $G$ acts generically freely on $\A^2_k$ and the map
  $\A^2_k \to \A^2_k/G$ gives rise to a versal $G$-torsor over a
  non-empty open subset of $\A^2_k/G$. We also have a $G$-torsor
  corresponding to the quotient map $\P^1_k \to \P^1_k/G$, and this
  torsor gives a one dimensional compression of the versal
  $G$-torsor. Thus $\ed_k(G) \leq 1$ and if $G$ is non-trivial then we
  must have $\ed_k(G) =1$.

  To prove that $G$ lifts to $GL_{2/k}$ if $\ed_k(G) = 1$, we first
  consider the case that $\ch(k) \neq 2$. Then we have an etale
  isogeny $SL_{2/k} \to PGL_{2/k}$, so if $G$ is infinitesimal it
  lifts (uniquely) to $SL_{2/k}$, therefore to $GL_{2/k}$.

  Suppose that $G$ is multiplicative (with
  $\ch(k)$ arbitrary) and let $C(G)$ be the centralizer of
  $G$ in
  $PGL_{2/k}$. By \cite[XI, Corollaire 2.4]{SGA3II}, the centralizer
  of a group scheme of multiplicative type in any smooth affine group
  scheme is smooth, so $C(G)$ is smooth; let
  $C^0(G)$ be its identity component. Now
  $C(G)$ is a proper subgroup scheme of
  $PGL_{2/k}$ since the centre of $PGL_{2/k}$ is trivial. Also,
  $G$ is contained in the centre of
  $C^0(G)$, which must therefore be positive dimensional. If
  $C^0(G)$ is not a torus then it must contain a smooth one
  dimensional unipotent subgroup $U$. However, since $G \subset
  G^0(G)$ and
  $G$ is multiplicative, it follows that in this case
  $C^0(G)$ must be two dimensional. Therefore,
  $C^0(G)$ must be a Borel subgroup of $PGL_{2/k}$ and
  $U$ must be its unipotent radical. However, the centre of a Borel
  subgroup of
  $PGL_{2/k}$ is trivial, so this contradicts the fact that
  $G$ is contained in the centre of
  $C^0(G)$.  Thus, any infinitesimal multiplicative subgroup
  $G$ of
  $PGL_{2/k}$ in any characteristic must be contained in a torus $T$.

  Let $T'$ be the inverse image of $T$ in $GL_{2/k}$, so $T'$ is a two
  dimensional torus. It is a maximal torus, of which the non-split
  ones are classified by separable quadratic extensions of $k$: the
  action of the Galois group on the character group $\cong \Z^2$ is
  given by switching coordinates. It follows that $T'$ is a quadratic
  twist of $\G_m$.  If $\ch(k) > 2$ this gives the claimed
  classification of all multiplicative $G$ with $\ed_k(G) = 1$.

  If $T$ is non-split then the above description shows that it
  contains a unique subgroup scheme of order $2$.  If $\ch(k) = 2$, it
  follows from this that $G \subset T'$ lifts to a subgroup of
  $GL_{2/k}$ iff $T$ (equivalently $T'$) is split; in particular, $G$
  must be isomorphic to $\mu_{2^n}$ for some $n$.

  By \cite[Proposition 6.1]{LMMR}, if $G$ is a non-trivial quadratic
  twist of $\mu_{2^n}$ then $\ed_k(G;2) = 2$. Since
  $\ed_k(G) \geq \ed_k(G;p)$ for any prime $p$, we conclude that if
  $\ch(k) = 2$ the only infinitesimal multiplicative group schemes $G$
  with $\ed_k(G) = 1$ are the $\mu_{2^n}$ for $n>0$.

  Now suppose $G \neq \{1\}$ is unipotent and assume that the
  $G$-action on $\P^1_k$ gives a versal $G$-torsor.
  \begin{claim*}
    $G$ preserves a point in $\P^1_k(k)$.
  \end{claim*}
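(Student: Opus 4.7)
The plan is to show that the unique geometric fixed point of $G$ on $\P^1_{\bar k}$ descends to a $k$-rational point. First, since $G$ is a non-trivial unipotent infinitesimal group scheme and $G_{\bar k} \subset PGL_{2,\bar k}$, I would use that every non-trivial unipotent subgroup scheme of $PGL_{2,\bar k}$ is contained in the unipotent radical of a unique Borel subgroup and hence fixes a unique $\bar k$-point of $\P^1_{\bar k}$. It follows that the fixed-point subscheme $(\P^1_k)^G \subset \P^1_k$ has its reduction equal to a single closed point $P$, whose residue field $K = k(P)$ is a purely inseparable extension of $k$ (since $K \otimes_k \bar k$ must be local, corresponding to a single geometric point). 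The goal then becomes to show $K = k$.

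Suppose for contradiction that $K \supsetneq k$. Over $K$, the base change $G_K$ fixes the $K$-rational point $P_K$ and hence is contained in a Borel subgroup of $PGL_{2,K}$, so $G_K$ lifts to $GL_{2,K}$. Note that the quotient morphism $\pi\colon \P^1_k \to \P^1_k/G$ has target isomorphic to $\P^1_k$ (via Luroth and the existence of a $k$-point in $\P^1_k/G$), and $\pi$ is a finite purely inseparable morphism of degree $|G|$, totally ramified over a unique closed branch point $B = \pi(P)$ with $k(B) \subseteq K$. Away from $B$, the map $\pi$ is the versal $G$-torsor under consideration.

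I would then try to leverage versality to show that the $K$-rational lift of $G_K$ must descend to $k$, forcing $K = k$. The natural approach is to interpret the obstruction to lifting $G \subset PGL_{2,k}$ to $GL_{2,k}$ as a class in $\mathrm{Ext}^1_k(G, \G_m)$ (nonzero precisely when $K \neq k$, as can be checked by a direct computation of the lift at the Lie algebra level, writing a generator as $X \in \mathfrak{gl}_2$ with $X^2 \in kI$ and seeking $\alpha \in k$ with $(X + \alpha I)^2 = 0$), and to extract from this obstruction a $G$-torsor over $k$, via a connecting homomorphism, that cannot be realized as a fibre $\pi^{-1}(v)$ for any $v \in (\P^1_k/G)(k) \setminus \{B\}$, contradicting the versality hypothesis. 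The main obstacle I anticipate is making this final incompatibility precise: the realizability of torsors via specialization depends sensitively on the structure of $G$ as a (possibly twisted) form of $\alpha_{p^n}$, so establishing the required non-realizability will require a careful analysis of the image of the specialization map $(\P^1_k/G)(k) \setminus \{B\} \to H^1_{\mathrm{fppf}}(k, G)$, isolating those classes that can only be realized by specialization at the branch. Once this is resolved, we conclude $K = k$, yielding the desired $k$-rational fixed point of $G$ on $\P^1_k$.
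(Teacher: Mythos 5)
Your reduction of the claim to showing that the unique geometric fixed point of $G$ on $\P^1_{\bar k}$ is $k$-rational is a reasonable reformulation (and the first step, that $G_{\bar k}$ fixes a unique point of $\P^1_{\bar k}$, is correct, e.g.\ because the height-one subgroup $G_1$ has $p$-nil one-dimensional Lie algebra, hence a unique fixed point, which the infinitesimal $G$ must then also fix). But the proof has a genuine gap exactly where all the content lies: you never actually derive a contradiction from $K \supsetneq k$. The final paragraph is a plan, not an argument --- you yourself flag that ``making this final incompatibility precise'' is an unresolved obstacle. As stated, the plan is also on shaky ground: versality does \emph{not} assert that every $G$-torsor over the base field $k$ itself arises as a fibre $\pi^{-1}(v)$ over a $k$-point of $\P^1_k/G$ (the defining property of versal torsors concerns torsors over infinite field extensions, and $k$ may be finite here since we are in positive characteristic), so the proposed ``non-realizable torsor over $k$'' would not immediately contradict versality even if you produced it. Moreover, the passage from the lifting obstruction in $\mathrm{Ext}^1_k(G,\G_m)$ (really, the class of the central extension of $G$ by $\G_m$ pulled back from $GL_{2/k}\to PGL_{2/k}$) to a distinguished $G$-torsor over $k$ via a ``connecting homomorphism'' is not defined: there is no exact sequence in sight whose boundary map sends that obstruction class to an element of $H^1_{\mathrm{fppf}}(k,G)$. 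So the proof is incomplete at its only nontrivial step.

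For contrast, the paper exploits versality in the opposite direction and much more directly: take a faithful representation $V$ of $G$ with a complete $G$-stable flag $V_\bullet$ (unipotence), arranged so that $G$ acts non-faithfully on $V_{n-1}$; versality of $\P^1_k\to\P^1_k/G$ yields a $G$-equivariant rational map $f\colon V\dar\P^1_k$, which by properness of $\P^1_k$ is defined at the generic point of $V_{n-1}$; since $f|_{V_{n-1}}$ cannot be dominant, its image is a single $k$-rational point, which is $G$-fixed because $V_{n-1}$ is $G$-stable. If you want to salvage your approach, you would need to supply the missing incompatibility argument in full; the representation-theoretic route avoids it entirely.
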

  \begin{proof}[Proof of claim]
  Consider a faithful representation of $G$ on a finite dimensional
  $k$-vector space $V$. Since $G$ is unipotent, by \cite[IV, \S 2,
  2.5]{DG} $V$ has a complete filtration by $k$-subspaces
  \[
    0 = V_0 \subset V_1 \subset \dots\subset V_{n-1} \subset V_n = V
  \]
  which is preserved by $G$. By induction on $n$, we may assume that
  the action on $V_{n-1}$ is not faithful. 

  Since $\lie(G)$ is one dimensional, the action of $G$ on $V$
  (which we now think of as an affine variety) is generically free.
  By the assumed versality of the $G$-action on $\P^1_k$, there exists
  a rational $G$-equivariant morphism $f:V \dar \P^1_k$. Since
  $\P^1_k$ is proper, $f$ is defined at the generic point of
  $V_{n-1}$. The restriction of $f$ to $V_{n-1}$ cannot be dominant
  since the action of $G$ on $V_{n-1}$ is not faithful, so
  $f(V_{n-1})$ must be a rational point $x \in \P^1_k(k)$. Since
  $V_{n-1}$ is $G$-invariant, it follows that so is $x$, and the claim
  is proved.
  \end{proof}
  The stabilizer in $PGL_{2/k}$ of any point in $\P^1_{k}(k)$ is Borel
  subgroup so $G$ is contained in a Borel $B$. Since $G$ is unipotent,
  it is in fact contained in the unipotent radical of $B$ which lifts
  to $GL_{2/k}$. Thus, $G$ also lifts to $GL_{2/k}$.
  \footnote{When $p=2$ there do exist infinitesimal unipotent
    subgroups of $PGL_{2/k}$ which do not lift to $GL_{2/k}$; this was
    pointed out to us by D.~Tossici.}

  Any unipotent subgroup of $GL_{2/k}$ in any characteristic preserves
  a line, so we see that any infinitesimal
  unipotent group $G$ with $\ed_k(G) = 1$ must be isomorphic to a
  subgroup of $\G_a$, so must be $\alpha_{p^n}$ for some $n>0$.
\end{proof}

\begin{rem}
  If $\ch(k) = 2$ and $\mu_2$ is embedded in $PGL_{2/k}$ as a subgroup
  of a torus (which is uniquely determined as the connected
  centralizer), then the $\mu_2$ action on $\P^1_k$ has a fixed point
  iff the torus is split. It follows that the embedding gives a versal
  $\mu_2$-torsor iff the torus is split.
\end{rem}

\section{Applications of the main theorem}

\subsection{Finite group schemes of essential dimension
  one} \label{sec:finite}%

Let $G$ be any finite group scheme over $k$ with $\ed_k(G)=1$. By
Theorem \ref{thm:edone}, $G$ embeds in $PGL_{2/k}$ and we also have
$\dim_k(\mg{g}) = 1$. If $G$ is any finite subgroup scheme of
$PGL_{2/k}$ with $\dim_k(\mg{g}) = 1$, then a sufficient condition for
$\ed_k(G) = 1$ is that $G$ lifts to $GL_{2/k}$. If this condition were
also necessary---we know this is the case for constant as well as
infinitesimal group schemes---then we would have a complete
classification of all finite group schemes $G$ with $\ed_k(G) = 1$.

As a first step, one should verify this for etale group schemes. This
can presumably be done using the classification of constant groups $G$
with $\ed_k(G) = 1$ in \cite{CHKZ}, but we do not do this here and
proceed after making some simplifying assumptions.

Recall that for any finite group scheme $G$ we have an exact sequence
\[
  1 \to G^0 \to G \to G^{et} \to 1 ,
\]
where $G^0$ is infinitesimal and $G^{et}$ is etale. This sequence
splits when $k$ is perfect, i.e., $G \cong G^0 \rtimes G^{et}$
\cite[II, \S 5, 2.4]{DG}.

\begin{thm} \label{thm:finite}%
  A finite group scheme over a perfect field $k$ with $G^{et}$
  constant has $\ed_k(G) \leq 1$ iff $G$ can be embedded in $PGL_{2/k}$,
  $\dim_k(\mg{g}) \leq 1$ and $G$ lifts to $GL_{2/k}$.
\end{thm}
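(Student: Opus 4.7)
For the direction assuming (a), (b), (c), I would follow the $(\Leftarrow)$ argument of Theorem~\ref{thm:edone}. The lift $G \hookrightarrow GL_{2/k}$ produces a linear $G$-action on $\A^2_k$ that is generically free: compatibility of the lift with the given $G \hookrightarrow PGL_{2/k}$ forces $G \cap \G_m = \{1\}$ in $GL_{2/k}$, so no non-identity element of $G$ is a scalar matrix and hence has only a proper fixed subvariety of $\A^2_k$; and since $\dim_k(\mg{g}) \leq 1$, any nonzero $X \in \mg{gl}_2$ has a proper kernel in $\A^2_k$, so the generic Lie-algebraic stabilizer is trivial. Then $\A^2_k \to \A^2_k/G$ is a versal $G$-torsor on a dense open subset, and $\P^1_k \to \P^1_k/G$ provides a one-dimensional compression, so $\ed_k(G) \leq 1$.

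Conversely, suppose $\ed_k(G) \leq 1$. Theorem~\ref{thm:edone}(i) yields (a) and (b), so the substance of the theorem is (c). The splitting $G \cong G^0 \rtimes G^{et}$ is available because $k$ is perfect. A direct torsor-induction argument exploiting this splitting---inducing a $G^0$-torsor $T_0$ to $T_0 \times G^{et}$, and pushing a $G$-torsor $T$ forward to $T/G^0$---shows $\ed_k(G^0) \leq \ed_k(G)$ and $\ed_k(G^{et}) \leq \ed_k(G)$, so both are at most one. Theorem~\ref{thm:edone}(ii) then supplies a lift $\tilde G^0 \hookrightarrow GL_{2/k}$, and Ledet's theorem (applicable because $G^{et}$ is constant) supplies a lift $\tilde G^{et} \hookrightarrow GL_{2/k}$.

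The remaining task, and the principal obstacle, is to combine $\tilde G^0$ and $\tilde G^{et}$ into a subgroup of $GL_{2/k}$ isomorphic to $G$ and mapping to $G \subset PGL_{2/k}$. I would argue by cases on $G^0$. The case $G^0 = \{1\}$ reduces to Ledet's theorem. If $G^0 = \alpha_{p^n}$ is unipotent, its normalizer in $PGL_{2/k}$ coincides with the Borel $B$ containing $G^0$, so $G \subset B$; and the upper-triangular Borel $\tilde B \subset GL_{2/k}$ surjects onto $B$ with a group-theoretic section (coming from any splitting of the torus extension $\G_m \to \tilde T \to T$), so we obtain the required lift $G \hookrightarrow \tilde B \subset GL_{2/k}$. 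If $G^0$ is multiplicative, it lies in a unique maximal torus $T \subset PGL_{2/k}$ (its connected centralizer), $G^{et}$ normalizes $T$, and $G \subset N_{PGL_2}(T)$. The obstruction here is that the surjection $N_{GL_2}(\tilde T) \twoheadrightarrow N_{PGL_2}(T)$ admits no global group-theoretic section (else every dihedral subgroup of $PGL_{2/k}$ would lift to $GL_{2/k}$, contradicting the classification in \cite{CHKZ}); one must instead use the already established existence of the lift $\tilde G^{et}$ to adjust its Weyl component so that conjugation returns $\tilde G^0$ to itself. Verifying this compatibility---a cocycle condition with values in $\G_m$ whose vanishing is governed by the lift of $G^{et}$---is the key technical step of the proof.
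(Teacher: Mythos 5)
Your overall architecture matches the paper's: sufficiency via the versal torsor $\A^2_k \to \A^2_k/G$ compressed by $\P^1_k \to \P^1_k/G$; necessity by splitting $G \cong G^0 \rtimes G^{et}$, lifting the two factors separately via Theorem \ref{thm:edone} and Ledet, and then confronting the problem of combining the lifts. But the combination step in the multiplicative case --- which you yourself flag as ``the key technical step'' and leave unverified --- is where the proposal has a genuine gap, and the gap is not merely a missing computation: the statement you are trying to prove there is false. You propose to adjust the Weyl component of $\wt{G^{et}}$ so that it normalizes $\wt{G^0}$, i.e.\ to show that a lift of $G$ always exists once $G^0$ and $G^{et}$ individually lift. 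In characteristic $2$, if $G \not\subset T$ (the torus containing $G^0$), no such lift exists: $G$ then contains a subgroup $G_1 \cong \mu_2 \times \Z/2\Z$ (an order-$2$ Weyl representative centralizes $\mu_2 \subset T$), and every faithful two-dimensional representation of $\mu_2 \times \Z/2\Z$ sends $\mu_2$ into the scalars, so $G_1$, hence $G$, admits no embedding in $GL_{2/k}$ compatible with the given one in $PGL_{2/k}$. No cocycle adjustment can succeed. The paper's resolution is of a completely different nature: it invokes $\ed_k(\mu_2 \times \Z/2\Z) = 2$ (Babic--Chernousov, \cite[Theorem 3.1]{bab-che}) to conclude that this case contradicts $\ed_k(G) = 1$ and therefore never arises. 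Without this external lower bound (or an equivalent), the multiplicative case in characteristic $2$ cannot be closed. In characteristic $\neq 2$ the difficulty you worry about evaporates for a different reason, also absent from your write-up: $\wt{G^0}$ has a \emph{unique} lift to $SL_{2/k}$, so any lift $\wt{G^{et}}$ automatically normalizes it and $\wt{G^0} \rtimes \wt{G^{et}}$ is already the desired lift, with no adjustment needed.

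A secondary but real issue: you apply Theorem \ref{thm:edone}(ii) and Ledet to get lifts of $G^0$ and $G^{et}$ from $\ed \leq 1$ alone, but those results produce a lift of \emph{some} embedding into $PGL_{2/k}$, whereas your gluing argument needs the \emph{given} embeddings $G^0, G^{et} \subset G \subset PGL_{2/k}$ to lift. These can differ: in characteristic $2$, $\mu_2$ embeds into a non-split maximal torus of $PGL_{2/k}$, and that embedding does not lift even though $\mu_2$ itself has $\ed_k = 1$. The paper avoids this by recording that versality of $\P^1_k \to \P^1_k/G$ passes to the restricted quotient maps for $G^0$ and $G^{et}$, and the lifting statements it invokes are about those specific versal actions. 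On the positive side, your treatment of the unipotent case (lift the whole Borel via the mirabolic section $b \mapsto \left(\begin{smallmatrix} t & a \\ 0 & 1\end{smallmatrix}\right)$ of $\wt{B} \thra B$) is a clean alternative to the paper's uniqueness argument, provided you first justify that $G^0$ lies in the unipotent radical of a Borel defined over $k$ --- which again comes from the versality-based fixed-point claim in the proof of Theorem \ref{thm:edone}, not from the abstract isomorphism type of $G^0$.
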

Note that if $k$ is algebraically closed this gives a classification
of all finite group schemes over $k$ with $\ed_k(G) = 1$. For general
perfect $k$ the method of proof, together with the classification
results of \cite{CHKZ}, can be used to make a more explicit list of
all such group schemes.

\begin{proof}
  The conditions are clearly sufficient and we have already seen that
  the first two are necessary, so we need to show that if
  $\ed_k(G) = 1$ then $G$ has an embedding in $PGL_{2/k}$ which lifts
  to $GL_{2/k}$. We may assume that $G^0$ is non-trivial since
  otherwise the theorem is a consequence of \cite{ledet-one}.

  If $\ed_k(G) = 1$ then there is an embedding
  $G \hookrightarrow PGL_{2/k}$ which makes the quotient map
  $\P^1_k \to \P^1_k/G$ generically into a versal $G$-torsor. This
  implies (as is well known) that $\ed_k(G^0) = 1$ and $\ed_k(G^{et})$
  and the corresponding quotient maps are also versal torsors. By
  Theorem \ref{thm:edone} and \cite[Theorem 8]{ledet-one}, both $G^0$
  and $G^{et}$ lift to $GL_{2/k}$; call the lifts $\wt{G^0}$ and
  $\wt{G^{et}}$.

  If $\ch(k) \neq 2$, we saw in the proof of Theorem \ref{thm:edone}
  that we can assume that $\wt{G^0} \subset SL_{2/k}$ and such a lift
  is unique. It follows that $\wt{G^{et}}$ normalizes $\wt{G^0}$ and
  the subgroup $\wt{G^0} \rtimes \wt{G^{et}} \subset GL_{2/k}$ is a
  lift of $G$.

  If $\ch(k) = 2$ and $G^0$ is unipotent then $\wt{G^0}$ is still
  unique, so again $\wt{G^0} \rtimes \wt{G^{et}} \subset GL_{2/k}$ is
  a lift of $G$. Finally, suppose that $G^0$ is multiplicative, so it
  is isomorphic to $\mu_{2^n}$ for some $n>0$. We may assume that
  $\wt{G^0}$ is a subgroup of the group $T'$ of diagonal matrices in
  $GL_{2/k}$ and $\wt{G^0} \cap Z(GL_{2/k}) = \{1\}$. If
  $\wt{G^0} \neq \{1\}$, one easily sees that the normalizer of
  $\wt{G^0}$ in $GL_{2/k}$ is $T'$. Thus, $G$ has a lift to $GL_{2/k}$
  iff $G \subset T$, the image of $T'$ in $PGL_{2/k}$. If this
  condition is not satisfied then $G$ contains a subgroup
  $G_1 \cong \mu_2 \times \Z/2\Z$. By \cite[Theorem 3.1]{bab-che}
  $\ed_k(\mu_2 \times \Z/2\Z) = 2$---one may give an elementary proof
  of this particular case by using that the action of $G_1$ on
  $\P^1_k$ does not have any fixed points\footnote{This, together with
    the reference in \cite{bab-che}, was pointed out by
    Z.~Reichstein.}--- and this implies that $\ed_k(G) > 1$.

  We conclude that if $\ed_k(G) = 1$ then $G$ lifts to $GL_{2/k}$.
\end{proof}

\subsection{Some finite group schemes of essential dimension two}

Using Theorem \ref{thm:edone} we may compute $\ed_k(G)$ for various
other group schemes. The point is that if one knows that
$\ed_k(G) \leq 2$ for some group scheme $G$ not occurring in the list
of group schemes with $\ed_k = 1$ then we must have $\ed_k(G) = 2$.
\begin{ex} \label{ex:ss}%
  Let $G = E[p]$, the $p$-torsion of a supersingular elliptic curve
  $E$ over a field $k$ with $\ch(k) = p>0$. It sits in an exact sequence
  \[
    0 \to \alpha_p \to E[p] \to \alpha_p \to 0 ,
  \]
  so by \cite[Theorem 1.4]{tos-vis} we have $\ed_k(E[p]) \leq 2$.
  However, $E[p]$ is not isomorphic to $\alpha_{p^2}$, so we conclude
  that $\ed_k(E[p]) = 2$. The group scheme $E[p]$ is trigonalizable,
  but $2 = \ed_k(E[p]) > \dim_k(\lie(E[p])) = 1$, so it is not
  \emph{almost special} in the sense of \cite[Definition
  4.2]{tos-vis}. This answers a question discussed in \cite[Example
  4.8]{tos-vis}.
\end{ex}

\def\cprime{$'$}

\end{document}